\begin{document}

\newtheorem{thm}{Theorem}
\newtheorem{prop}[thm]{Proposition}
\newtheorem{conj}[thm]{Conjecture}
\newtheorem{lem}[thm]{Lemma}
\newtheorem{cor}[thm]{Corollary}
\newtheorem{axiom}[thm]{Axiom}
\newtheorem{sheep}[thm]{Corollary}
\newtheorem{deff}[thm]{Definition}
\newtheorem{fact}[thm]{Fact}
\newtheorem{example}[thm]{Example}
\newtheorem{slogan}[thm]{Slogan}
\newtheorem{remark}[thm]{Remark}
\newtheorem{quest}[thm]{Question}
\newtheorem{zample}[thm]{Example}

\newcommand{\sthat}{\hspace{.1cm}| \hspace{.1cm}}
\newcommand{\id}{\operatorname{id} }
\newcommand{\acl}{\operatorname{acl}}
\newcommand{\dcl}{\operatorname{dcl}}
\newcommand{\irr}{\operatorname{irr}}
\newcommand{\aut}{\operatorname{Aut}}
\newcommand{\fix}{\operatorname{Fix}}

\newcommand{\oo}{\mathcal{O}}
\newcommand{\aaa}{\mathcal{A}}
\newcommand{\mm}{\mathcal{M}}
\newcommand{\curg}{\mathcal{G}}
\newcommand{\bbf}{\mathbb{F}}
\newcommand{\A}{\mathbb{A}}
\newcommand{\R}{\mathbb{R}}
\newcommand{\Q}{\mathbb{Q}}
\newcommand{\C}{\mathbb{C}}
\newcommand{\cc}{\mathcal{C}}
\newcommand{\dd}{\mathcal{D}}
\newcommand{\N}{\mathbb{N}}
\newcommand{\Z}{\mathbb{Z}}
\newcommand{\cF}{\mathcal F}
\newcommand{\cB}{\mathcal B}
\newcommand{\cU}{\mathcal U}
\newcommand{\cV}{\mathcal V}
\newcommand{\cG}{\mathcal G}
\newcommand{\cD}{\mathcal D}
\newcommand{\curly}{\mathcal{C}}
\newcommand{\durly}{\mathcal{D}}
\newcommand{\fff}{\mathcal{F}}
\newcommand{\calc}{\mathcal{C}}
\newcommand{\GG}{\mathbb{G}}
\newcommand{\PP}{\mathbb{P}}
\newcommand{\Gal}{\mathrm{Gal}}
\newcommand{\Aut}{\mathrm{Aut}}
\newcommand{\signature}{\mathrm{sign}}

\definecolor{mypink3}{cmyk}{0, 0.7808, 0.4429, 0.1412}
\newcommand{\red}[1]{{\color{red}{#1}}}

\newcommand{\mahrad}[1]{{\color{blue} \sf $\clubsuit\clubsuit\clubsuit$ Mahrad: [#1]}}
\newcommand{\ramin}[1]{{\color{red}\sf $\clubsuit\clubsuit\clubsuit$ Ramin: [#1]}}
\newcommand{\ramintak}[1]{{\color{mypink3}\sf $\clubsuit\clubsuit\clubsuit$ Ramin2: [#1]}}
\newcommand{\ramintakloo}[1]{{\color{green} \sf $\clubsuit\clubsuit\clubsuit$ Ramin3 [#1]}}
\newcommand{\gautam}[1]{\noindent {\red{\sf $\ddagger\ddagger\ddagger$
      Gautam: [#1]}}}

\DeclareRobustCommand{\hlgreen}[1]{{\sethlcolor{green}\hl{#1}}}
\DeclareRobustCommand{\hlcyan}[1]{{\sethlcolor{cyan}\hl{#1}}}
\newcommand{\Fmodtor}{F^\times / \mu(F)}
\title[Subrings]{Counting subrings of $\mathbb{Z}^n$ of non-zero co-rank }
\author{Sarthak Chimni} 
\address{Department of Mathematics, Statistics, and Computer Science, University of Illinois at Chicago, 851 S Morgan St (M/C 249), Chicago, IL 60607}
\email{sarthakchimni@gmail.com}
\author{Gautam Chinta}
\email{gchinta@ccny.cuny.edu}
\author{Ramin Takloo-Bighash}
\email{rtakloo@uic.edu}
\begin{abstract}
In this paper we study subrings of $\Z^{n+k}$ of co-rank $k$  We
relate the number of such subrings $R$ with torsion subgroup
$(\Z^{n+k}/R)_{\rm{tor}}$ of size $r$ to the number of full rank
subrings of $\Z^n$ of index $r$.
\end{abstract}
\maketitle

\section{Introduction} 
Let $\Z^n$ be the set of $n$-tuples $(x_1, \dots, x_n)$ of integers.  This set comes with a natural addition and multiplication given by 
$$
(x_1, \dots, x_n) + (y_1, \dots, y_n) = (x_1 + y_1, \dots, x_n + y_n), 
$$
and 
$$
(x_1, \dots, x_n) \cdot (y_1, \dots, y_n) = (x_1  \cdot  y_1, \dots, x_n  \cdot  y_n). 
$$
Under these operations $\Z^n$ is a ring.  As is well known the ring
$\Z^n$ has a simple additive group structure, but when it comes to its
multiplicative structure there are some very easy-to-state basic
questions that we do not know how to answer. For example, let $f_n(r)$
be the number of subrings $R$ of $\Z^n$ with identity of index
$r$. Necessarily then, $R$ is a free $\Z$-module of rank $n$. The
counting function $f_n(r)$ and associated generating series
$F_n(s) : = \sum_{r=1}^\infty f_n(r) r^{-s}$ are basic objects of
interest.  

The general theory developed by Grunewald, Segal and Smith \cite{GSS}
shows that $F_n(s)$ can be expressed as as Euler product of rational
functions of $p^{-s}$ over all primes $p$, but only for $n\leq 4$ has
this rational function been computed explicitly.  For $n=2$ this
expression is immediate.  It is originally due to Datskovksy and
Wright \cite{DW} for $n=3$ and Nakagawa \cite{N} for $n=4.$ In fact,
these authors studied the more general problem understanding the
distribution of orders in cubic or quartic algebras, a particular case
of which was the computation of the generating series $F_3(s)$ in
\cite{DW} and $F_4(s)$ in \cite{N}.  Liu \cite{Liu} proved a number of
interesting theorems about $f_n(r)$, including the computation of
$F_n(s) : = \sum_{r=1}^\infty f_n(r) r^{-s}$ for $n \leq 4$ by an
alternative method.

For $n > 4$ the situation is considerably more complicated.  Kaplan,
Marcinek, and Takloo-Bighash \cite{KMT}, by using the methods of
$p$-adic integration, obtained results for the location and order of
the rightmost pole of $F_5(s)$ without explicitly computing the
series. They also obtained estimates for the location of the first
pole of $F_n(s)$ for $n > 5$. One of the reasons to study the analytic
properties of the generating series $F_n(s)$ is to find asymptotic
formulae for $N_n(B) = \sum_{r \leq B} f_n(r)$. The theory of $p$-adic
integration \cite{GSS} shows that $N_n(B)$ grows like a non-zero
constant $C_n$ multiplied by $B^{\alpha(n)} (\log B)^{b(n) - 1}$ for
$\alpha(n) \in \Q$ and $b(n) \in \N$. Combining the results of
\cite{DW, N, KMT} we know the following
about the behavior of $N_n(B)$:
\begin{thm}
If $n \leq 5$ there is a constant $C_n$ such that 
$$
N_n(B) \sim  C_n B (\log B)^{{n \choose 2}-1}
$$
as $B \to \infty$. If $n \geq 6$, for any $\epsilon > 0$ we have 
$$
B (\log B)^{{n \choose 2}-1} \ll N_n(B) \ll_\epsilon B^{\frac{n}{2}-\frac{7}{6} + \epsilon}.
$$
\end{thm}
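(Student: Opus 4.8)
The plan is to read off the asymptotics of $N_n(B)$ from the rightmost pole of $F_n(s)$, combining the analytic structure supplied by \cite{GSS} with a Tauberian theorem. First I would record the standard Tauberian input: since the coefficients $f_n(r)$ are non-negative and $F_n(s)=\prod_p F_{n,p}(s)$ is, by \cite{GSS}, an Euler product of rational functions of $p^{-s}$, the theorem of Delange (or Wiener--Ikehara with polynomial vertical growth) applies. It says that if $F_n(s)$ converges for $\Re s>\alpha$, continues meromorphically across $\Re s=\alpha$ with a single pole there, located at $s=\alpha$ and of order $b$, and is of moderate growth in vertical strips, then $N_n(B)\sim C_n B^{\alpha}(\log B)^{b-1}$. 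To verify the hypotheses one factors $F_n(s)$, near its rightmost pole, as a product $\prod_j \zeta(a_j s-(a_j-1))$ of shifted zeta functions times a Dirichlet series holomorphic and bounded in a slightly larger half-plane; this is the usual device for identifying $(\alpha,b)$ and checking the growth condition from the local factors.

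\emph{The range $n\le 5$.} Here the rightmost pole is known exactly: $F_2(s)=\zeta(s)$ trivially, $F_3(s)$ was computed by Datskovsky--Wright \cite{DW}, $F_4(s)$ by Nakagawa \cite{N}, and the location and order of the rightmost pole of $F_5(s)$ by Kaplan--Marcinek--Takloo-Bighash \cite{KMT}. In every case the rightmost pole sits at $s=1$ with order ${n \choose 2}$: the product $\prod_{j=1}^{{n\choose 2}}\zeta(a_j s-(a_j-1))$ consists of exactly ${n \choose 2}$ factors, each with a simple pole at $s=1$. Feeding $\alpha=1$ and $b={n \choose 2}$ into the Tauberian theorem gives $N_n(B)\sim C_n B(\log B)^{{n \choose 2}-1}$, with $C_n$ the resulting nonzero residue constant.

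\emph{The lower bound for $n\ge 6$.} For this it suffices to minorise $f_n(r)$ coefficientwise. I would exhibit an explicit sub-family of subrings (for instance those whose Hermite normal form is as degenerate as possible) showing that each local factor $F_{n,p}(s)$ dominates a product of ${n \choose 2}$ geometric series of the shape $(1-p^{(a_j-1)-a_j s})^{-1}$, so that $\sum_r f_n(r)r^{-s}$ dominates $\prod_{j=1}^{{n\choose 2}}\zeta(a_j s-(a_j-1))$ coefficientwise. Applying the Tauberian theorem to this minorising series, whose partial sums grow like $B(\log B)^{{n \choose 2}-1}$, and using $f_n(r)\ge 0$, yields $N_n(B)\gg B(\log B)^{{n \choose 2}-1}$; note this argument needs no meromorphic continuation of $F_n$ itself and is unaffected by whether the true abscissa of convergence exceeds $1$.

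\emph{The upper bound for $n\ge 6$, and the main obstacle.} The hard part will be the estimate $N_n(B)\ll_\epsilon B^{n/2-7/6+\epsilon}$, equivalently a bound $\alpha(n)\le n/2-7/6$ on the abscissa of convergence. The strategy is to bound the number of subrings of index $r$ by writing each in Hermite normal form and counting the integer matrices whose columns contain $(1,\dots,1)$ and are closed under the coordinatewise product; encoding the multiplicative-closure conditions as congruences and estimating the resulting count by the $p$-adic integral of \cite{GSS} produces, upon naive evaluation, the exponent $n/2$. The genuine difficulty, carried out in \cite{KMT}, is to extract the additional saving of $7/6$ from these constraints, which requires a careful analysis of the $p$-adic integral rather than a crude volume bound; I expect this to be the true bottleneck. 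Assembling the four ingredients then gives the stated dichotomy.
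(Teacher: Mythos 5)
The paper itself gives no proof of this theorem---it is presented as a summary of results imported from \cite{DW}, \cite{N}, \cite{KMT}, and the general framework of \cite{GSS}---and your outline correctly identifies both the Tauberian mechanism and the precise points (the explicit Euler products for $n\le 4$, the order-${5 \choose 2}$ pole of $F_5(s)$, and the $\frac{n}{2}-\frac{7}{6}$ upper bound for $n\ge 6$) at which the real work is done in those references. Your proposal is therefore consistent with, and somewhat more detailed than, the paper's treatment; like the paper, it is ultimately a correctly assembled appeal to the cited literature rather than a self-contained argument.
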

In fact, results of Brakenhoff \cite{Brakenhoff} and
Atanasov-Kaplan-Krakoff-Menzel \cite{akkm} give slightly better bounds
for $n\geq 6.$

As mentioned above $f_n(r)$ counts full rank $\Z$-submodules of $\Z^n$
that are of index $r$. A natural question to ask is whether
one can quantify the distribution of subrings of $\Z^n$ which as
$\Z$-submodules are not of rank $n$. Let us make this precise. Let
$\phi_n(r)$ be the number of full-rank sublattices of $\Z^n$ which are closed
under the multiplication of $\Z^n$.  It's a well-known fact (e.g., Proposition 2.3 of \cite{Liu}) that for each $n \geq 2, r \geq 1$ we have $f_n(r) = \phi_{n-1}(r)$. It turns out that for many purposes the function $\phi_n(r)$ is a more convenient function to work with---and in fact the theory developed in \cite{GSS} deals with the function $\phi_n(r)$.  

\

We now define an analogue of the function $\phi_n(r)$ for lattices of non-zero co-rank. For $0 \leq k \leq n$, define $\phi_{n, k}(r)$ be the number of sublattices $L$ of $\Z^n$ which have the following properties: 
\begin{itemize}
\item The lattice $L$ is closed under multiplication; 
\item as a $\Z$-submodule, $L$ is of co-rank $k$ in $\Z^n$; 
\item the size of the torsion subgroup of $\Z^n/ L$ is equal to $r$. 
\end{itemize}
Clearly, $\phi_{n, 0}(r) = \phi_n(r)$.  It turns out that the function $\phi_{n, k}(r)$ and $\phi_n(r)$ have a simple relationship. The following theorem is our main result. 
\begin{thm}\label{main-theorem}
For all $n, k, r$ we have
$$
\phi_{n+k, k}(r) = \left\{ {n + k +1 \atop n+1}\right\} \cdot \phi_n(r). 
$$
Here, for natural numbers $u, v$, $\left\{ {u \atop v} \right\}$ is
the Stirling number of second kind defined as the number of partitions
of a set with $u$ elements into $v$ non-empty subsets.
\end{thm}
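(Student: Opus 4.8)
The plan is to organize the count of $\phi_{n+k,k}(r)$ according to the rational subspace spanned by the lattice. Write $m = n+k$. Given a multiplicatively closed sublattice $L \subseteq \Z^m$ of co-rank $k$, set $V = \Q\otimes L \subseteq \Q^m$; this is an $n$-dimensional $\Q$-subspace which is again closed under coordinatewise multiplication, and $L$ is a full-rank sublattice of the saturation $L^{\mathrm{sat}} := V \cap \Z^m$. Since $\Z^m/L^{\mathrm{sat}}$ is torsion-free, the torsion subgroup of $\Z^m/L$ is exactly $L^{\mathrm{sat}}/L$, so the condition that this torsion subgroup have size $r$ is equivalent to $[L^{\mathrm{sat}} : L] = r$. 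Thus I would first partition the sublattices $L$ being counted according to their span $V$, reducing the problem to (i) enumerating the admissible $V$, and (ii) for each fixed $V$, counting multiplicatively closed sublattices of $L^{\mathrm{sat}}$ of index $r$.

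For step (i) the plan is to classify the $n$-dimensional multiplicatively closed subspaces $V \subseteq \Q^m$. For each coordinate $i$ the evaluation $\chi_i(v) = v_i$ is a $\Q$-algebra homomorphism $V \to \Q$, and it is nonzero precisely when $i$ lies in the support $T$ of $V$. Listing the distinct nonzero characters among $\chi_1,\dots,\chi_m$ as $\psi_1,\dots,\psi_s$, Artin's independence of characters shows they are linearly independent in $V^\ast$, while the map $v \mapsto (\psi_1(v),\dots,\psi_s(v))$ is injective; comparing dimensions forces $s = n$ and exhibits an isomorphism $V \cong \Q^n$. In particular $V$ has an identity, which must be the indicator vector $\mathbf 1_T$, so $V$ is the space of vectors that are constant on each block of a partition of $T$ into $n$ nonempty blocks and vanish off $T$. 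Hence admissible $V$ correspond bijectively to partitions of $[m] = \{1,\dots,m\}$ into $n$ nonempty ``value'' blocks together with one possibly empty ``zero'' block $[m]\setminus T$. Treating the zero block as fused with one extra point $\star$, these are exactly the partitions of the $(m+1)$-element set $[m]\cup\{\star\}$ into $n+1$ blocks, and the classical identity $\left\{ {m+1 \atop n+1} \right\} = \sum_{c} \binom{m}{c}\left\{ {m-c \atop n} \right\}$ (the sum over the size $c$ of the zero block) confirms their number is $\left\{ {m+1 \atop n+1} \right\}$.

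For step (ii), fix such a $V$ with value blocks $B_1,\dots,B_n$. A vector of $V$ lies in $\Z^m$ iff its (constant) value on each $B_j$ is an integer, so $L^{\mathrm{sat}} = V \cap \Z^m = \bigoplus_{j=1}^n \Z\,\mathbf 1_{B_j}$, and since the $\mathbf 1_{B_j}$ are orthogonal idempotents this is isomorphic as a ring to $\Z^n$. Under this isomorphism the multiplicatively closed full-rank sublattices of $L^{\mathrm{sat}}$ of index $r$ correspond exactly to the full-rank multiplicatively closed sublattices of $\Z^n$ of index $r$, of which there are $\phi_n(r)$ by definition. Summing over the $\left\{ {n+k+1 \atop n+1} \right\}$ choices of $V$ then yields $\phi_{n+k,k}(r) = \left\{ {n+k+1 \atop n+1} \right\}\cdot \phi_n(r)$.

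I expect the main obstacle to be step (i), and within it the verification that every multiplicatively closed subspace actually contains the identity $\mathbf 1_T$ of its support and is therefore ``constant on blocks''; the character-independence argument is what makes this work and rules out any exotic multiplicatively closed subspaces. The remaining ingredients---the saturation bookkeeping in the first paragraph, the ring isomorphism $L^{\mathrm{sat}} \cong \Z^n$, and the Stirling identity---should then be routine.
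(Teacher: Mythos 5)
Your argument is correct, and it reaches the theorem by a genuinely different route from the paper. The paper works entirely over $\Z$ with matrices: it puts a basis of $L$ into a near-triangular form (Lemma \ref{lower}), proves by induction on $n$ that the basis matrix of a co-rank $k$ multiplicative sublattice has exactly $n-k$ distinct nonzero columns (the Rigidity Theorem \ref{rigidity}), reformulates this as $L = g(L')$ for an ``acceptable'' map $g:\Z^n\to\Z^{n+k}$, and then counts equivalence classes of such maps by set partitions of an $(n+k+1)$-element set into $n+1$ blocks. You instead pass to the $\Q$-span $V=\Q\otimes L$ and classify the $n$-dimensional multiplicatively closed subspaces of $\Q^{n+k}$ directly: the coordinate functionals are algebra characters of $V$, Dedekind--Artin independence bounds the number of distinct nonzero ones by $\dim V$ while injectivity of the evaluation map bounds it below, and the resulting isomorphism $V\cong\Q^n$ forces $V$ to be the ``constant on blocks'' space of a partition --- which is exactly the distinct-columns rigidity statement, obtained without the induction or the matrix normal form. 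Your saturation bookkeeping ($(\Z^{n+k}/L)_{\rm tor}=L^{\mathrm{sat}}/L$ and $L^{\mathrm{sat}}=\bigoplus_j\Z\,\mathbf 1_{B_j}\cong\Z^n$ as rings) replaces the paper's Lemma \ref{index} and Proposition \ref{acceptable-order}, and makes the uniqueness of the decomposition transparent, since $L$ determines $V$ and $V$ determines the partition; the final count of subspaces by $\left\{ {n+k+1 \atop n+1} \right\}$ coincides with the paper's count of ordered acceptable maps. What your approach buys is a shorter, more conceptual proof of rigidity that explains \emph{why} the columns must repeat (there are only $n$ available characters); what the paper's approach buys is elementarity --- everything is explicit integer row reduction, with no appeal to independence of characters. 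One small point worth spelling out if you write this up: the independence-of-characters lemma is usually stated for unital algebras or for monoid characters, so you should note (as your argument implicitly requires) that the standard proof goes through verbatim for distinct nonzero multiplicative linear functionals on a not-necessarily-unital algebra, since unitality of $V$ is a conclusion of your argument, not a hypothesis.
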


The main step in the proof of this theorem is a rigidity result (Theorem \ref{rigidity}) which determines exactly what types of lattices contribute to the counting function $\phi_{n+k, k}(r)$. The rest of the proof consists of a combinatorial argument counting these lattices.  For information on Stirling numbers of the second kind, see \cite{Bogart}, especially Ch. 2, \S 3. 

\

The rigidity result mentioned above is the statement that matrices corresponding to multiplicative sublattices will be of very special shape.  The upshot of this result is that multiplicative sublattices of non-zero co-rank in $\Z^n$ are all obtained from full rank multiplicative sublattices in various $\Z^m$'s for $m < n$ in very specific ways.  Let us illustrate the results we are about to prove using co-rank two multiplicative sublattices in $\Z^4$. 

\

Define four maps $\Z^2 \to \Z^4$ by the following formulae: 
$$
f_1(x, y) = (x, y, 0, 0), 
$$
$$
f_2(x, y) = (x, y, y, 0), 
$$
$$
f_3(x, y) = (x, y, y, y), 
$$
$$
f_4(x, y) = (x, x, y, y). 
$$

We can make more maps $\Z^2 \to \Z^4$ by considering maps of the form $\tau \circ f_j \circ \sigma$ for $\sigma \in S_2, \tau \in S_4$---we call these maps {\em acceptable}.  For example, the map that sends $(x, y)$ to $(y, x , 0, x)$ is acceptable. A consequence of our rigidity result is that if $L$ is a multiplicative sublattice of co-rank two in $\Z^4$, then there is a multiplicative sublattice  $L'$ of full rank in $\Z^2$ such that $L = f(L')$ for some acceptable map $f$. Furthermore, the size of the torsion subgroup of $\Z^4/L$ is equal to the index of $L'$ in $\Z^2$. We will see that the scenario described here is completely general. 

\

Theorem \ref{main-theorem} was discovered thanks to the Online Encyclopedia of Integer Sequences (OEIS).  We computed a few values of the function $\sigma(n, k)$ by hand and then a search through OEIS revealed the connection to the Stirling Numbers of the Second Kind. These numbers appear under sequence A008277 in the Encyclopedia \cite{Online}.

\

The first named author is partially supported by NSF DMS 1601289.  The
second author wishes to thank the Simons Foundation for partial
support of his work through a Collaboration Grant. The authors also
wish to thank Nathan Kaplan for helpful conversations.

\

This paper is organized as follows. In \S \ref{section-rigidity} we review basic definitions and prove the rigidity theorem.  We prove  the main theorem in \S \ref{section-proof_main_thm}.

\section{Rigidity Theorem}\label{section-rigidity}
A {\em lattice} is a $\Z$-submodule of some $\Z^n$. When referring to a specific $\Z^n$ we usually speak of a {\em sublattice}. We call a sublattice $L$ of $\Z^n$ a {\em multiplicative sublattice} if for every $u, v \in L$ we have $u \cdot v \in L$. A multiplicative sublattice $L$ is a subring if it contains the identity element $(1, \dots, 1)$.  We refer the reader to Liu \cite{Liu} for basic properties of multiplicative lattices of full rank in $\Z^n$. 

\

Let $L$ be a lattice of rank $m$ in $\Z^n$. We define the {\em co-rank} of $L$ to be the integer $n - m$. The following lemma is an easy consequence of row operations. 

\begin{lem}\label{lower}
Given a lattice $L$ in $\Z^n$ of co-rank $k$ there is an $(n-k) \times n$ integral matrix $M= (x_{ij})$ such that $x_{ij} = 0$ whenever $j - i > k$, and with the property that the rows of $M$ generate $L$. \end{lem}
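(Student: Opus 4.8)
The plan is to reduce a matrix of generators of $L$ to a suitable normal form by integer row operations and then read off the band condition from the positions of the pivots. First, since $L$ has co-rank $k$ it has rank $m := n - k$, so $L$ admits a $\Z$-basis $v_1, \dots, v_m \in \Z^n$. Form the $m \times n$ integer matrix $A$ whose rows are $v_1, \dots, v_m$; its rows generate $L$. The point I will use repeatedly is that left multiplication by a matrix in $\mathrm{GL}_m(\Z)$ — equivalently, a sequence of integer row swaps, sign changes, and additions of an integer multiple of one row to another — does not change the $\Z$-span of the rows, and hence preserves $L$ exactly. I am therefore free to bring $A$ into any shape reachable by such operations.

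Next I would reduce $A$ to the Hermite-type normal form in which each row is distinguished by its \emph{last} nonzero entry rather than its first. Concretely, after reversing the order of both the columns and the rows one applies the ordinary row-echelon (Hermite) reduction over $\Z$ and then undoes the two reversals. Because $\Z$ is a principal ideal domain this reduction exists, and it yields a matrix $M = (x_{ij})$ whose $i$-th row has a distinguished trailing pivot in some column $c_i$, with $c_1 < c_2 < \cdots < c_m$ and with $x_{ij} = 0$ for every $j > c_i$.

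The band condition now comes from counting pivot columns. The integers $c_1 < c_2 < \cdots < c_m$ are strictly increasing and lie in $\{1, \dots, n\}$, so $c_m \le n$, $c_{m-1} \le n - 1$, and in general $c_i \le n - (m - i) = (n - m) + i = i + k$, using $n - m = k$. Consequently, whenever $j - i > k$ we have $j > i + k \ge c_i$, hence $j > c_i$ and therefore $x_{ij} = 0$. This is precisely the asserted vanishing, and $M$ is the desired $(n - k) \times n$ matrix whose rows generate $L$.

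The only genuine subtlety is \emph{orientation}: ordinary row echelon form places the pivots and the zero block on the wrong side (zeros in the lower-left rather than the upper-right), so one must reduce toward the bottom-right corner, as described, to obtain trailing rather than leading pivots. Once the reduction is oriented correctly, the pivot-counting inequality $c_i \le i + k$ is immediate and the lemma follows, so I expect no serious obstacle beyond this bookkeeping.
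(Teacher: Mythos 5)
Your argument is correct and is precisely the elaboration of what the paper leaves implicit: the paper gives no proof, remarking only that the lemma is ``an easy consequence of row operations,'' and your Hermite-type reduction with trailing pivots, together with the pivot-column count $c_i \le i + k$, is exactly that argument carried out carefully. The orientation issue you flag (reducing toward the bottom-right so the zero block lands in the upper-right) is handled correctly, since the column reversal and its inverse cancel and the net transformation is left multiplication by a matrix in $\mathrm{GL}_{n-k}(\Z)$, which preserves the row span $L$.
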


Note that the matrix $M$ as in the lemma is not unique. In fact, if $A$ is any $(n-k) \times (n-k)$  lower triangular integral matrix with determinant $1$, then $AM$ is another matrix that satisfies the conditions of the lemma.  

\

Let $M$ be the matrix corresponding to the lattice $L$ of co-rank $k$ as in Lemma \ref{lower}. Then $L$ is multiplicative if and only if for every two rows $v, w$ of $M$, $v \cdot w \in L$.  
\begin{prop}\label{corank-1}
Let L be a multiplicative sublattice of $\mathbb{Z}^n$ of co-rank 1.  Then L has a basis which forms the rows of a $ (n-1) \times n$ matrix M such that $M_{ij} = 0$ if $i<j-1$  and M has a column of zeros or two columns of M are identical.
\end{prop}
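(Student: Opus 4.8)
The plan is to work not with the lattice $L$ itself but with the $\Q$-hyperplane it spans. Set $V = L \otimes_\Z \Q \subseteq \Q^n$; since $L$ has co-rank one, $V$ is an $(n-1)$-dimensional subspace of $\Q^n$. The key observation is that multiplicativity of $L$ passes to $V$: if $x = \sum_a \alpha_a u_a$ and $y = \sum_b \beta_b w_b$ with $u_a, w_b \in L$ and $\alpha_a, \beta_b \in \Q$, then $x \cdot y = \sum_{a,b} \alpha_a \beta_b (u_a \cdot w_b)$ lies in $V$ because each $u_a \cdot w_b \in L \subseteq V$. Thus $V$ is a codimension-one subspace of $\Q^n$ closed under the coordinate-wise (Hadamard) product, and it suffices to classify all such $V$: I will show that every such $V$ is either $\{x : x_j = 0\}$ for some $j$ or $\{x : x_j = x_k\}$ for some $j \ne k$.

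Write $V = \ker \ell$ for a nonzero linear form $\ell(x) = \sum_{i=1}^n c_i x_i$. Closure under the Hadamard product says precisely that $\ell(x \cdot y) = \sum_i c_i x_i y_i = 0$ for all $x, y \in V$; equivalently, the symmetric bilinear form $B$ with matrix $\operatorname{diag}(c_1, \dots, c_n)$ vanishes identically on $V \times V$, so $V$ is a totally isotropic subspace for $B$. (Over $\Q$ one passes freely between $B$ and the diagonal quadratic form $\sum_i c_i x_i^2$ by polarization.)

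The heart of the argument, and the step I expect to be the main obstacle, is the linear-algebraic claim that at most two of the $c_i$ can be nonzero. Let $m$ be the number of nonzero $c_i$, so $B$ has rank $m$ and its radical $R$ has dimension $n - m$. Because $V$ is totally isotropic we have $V \subseteq V^{\perp_B}$, and combining this with the standard identity $\dim V^{\perp_B} = n - \dim V + \dim(V \cap R)$ gives $\dim(V \cap R) \ge 2\dim V - n = n-2$. Since $V \cap R \subseteq R$ forces $\dim(V\cap R) \le n-m$, we conclude $m \le 2$. (Equivalently, a nondegenerate quadratic form in $m \ge 3$ variables has Witt index $\lfloor m/2 \rfloor < m-1$, which is too small for the image of $V$ to be totally isotropic.)

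It remains to read off the two cases. If $m = 1$, say $c_j \ne 0$, then $V = \{x : x_j = 0\}$. If $m = 2$, say $c_j, c_k \ne 0$ with $j \ne k$, then substituting the solution $x_j = -c_k t,\ x_k = c_j t$ of the linear constraint into $\sum_i c_i x_i^2 = 0$ yields $c_j c_k (c_j + c_k) t^2 = 0$ for all $t$, forcing $c_j + c_k = 0$, so $V = \{x : x_j = x_k\}$; no other configuration survives. Finally I invoke Lemma \ref{lower} to choose a basis of $L$ whose rows form a matrix $M$ with $M_{ij} = 0$ for $i < j - 1$. Every row of $M$ lies in $L \subseteq V$, so in the first case the $j$-th column of $M$ is identically zero, while in the second case the $j$-th and $k$-th columns of $M$ coincide. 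This is exactly the assertion of the proposition.
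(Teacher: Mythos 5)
Your proof is correct, and it takes a genuinely different route from the paper's. The paper argues by induction on $n$: it puts a basis of $L$ into the echelon form of Lemma \ref{lower}, peels off the last column, applies the inductive hypothesis to the top-left block $M'$, and then compares structure constants of products of rows (the nonvanishing corner entry $x_{k,k+1}$ forces the coefficient $\gamma_k$ to vanish, which is what lets the identity of columns propagate to the last row). You instead tensor with $\Q$ and reduce to a basis-free classification: a Hadamard-closed hyperplane $V=\ker\bigl(\sum_i c_i x_i\bigr)$ is totally isotropic for $\operatorname{diag}(c_1,\dots,c_n)$, and the dimension count $n-2\le\dim(V\cap R)\le n-m$ forces $m\le 2$, whence $V$ is $\{x:x_j=0\}$ or $\{x:x_j=x_k\}$; the echelon shape then comes from Lemma \ref{lower} exactly as in the paper, but the zero/equal-columns conclusion is read off from $V$ rather than proved row by row. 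Your argument is shorter, avoids induction, and---because the dichotomy is a statement about $V=L\otimes_\Z\Q$ rather than about a particular generating matrix---it yields the subsequent Corollary (that \emph{every} basis of $L$ exhibits a zero column or two identical columns) with no extra work, where the paper needs a separate remark about invariance under row operations. What the paper's integral, inductive argument buys in exchange is that its one-column-at-a-time analysis is the same move reused in the proof of the Rigidity Theorem \ref{rigidity}, so the two proofs share their machinery. I verified the one step you flagged as the potential obstacle: the identity $\dim V^{\perp_B}=n-\dim V+\dim(V\cap R)$ holds for a possibly degenerate symmetric form with radical $R$, and the $m=2$ substitution $x_j=-c_k$, $x_k=c_j$ (other coordinates $0$) does lie in $V$ and correctly forces $c_j+c_k=0$. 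No gaps.
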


\begin{proof}
We prove this using induction on $n$. If $n =1$ then there is no sublattice of co-rank $1$ so the result is vacuously true. So we consider the case $n=2$. Any multiplicative sublattice $L$ of co-rank $1$ has rank 1 and therefore is generated by a non-zero row vector of length $2$,  
\[
M=
\begin{bmatrix}    
x_{11}       & x_{12} 
\end{bmatrix}.
\]
As $L$ is multiplicative, $M.M$ should be a scalar multiple of $M$. Hence we get the following equations:
\begin{subequations}
\begin{align}
x_{11}^2 = \lambda x_{11} \\
x_{12}^2 = \lambda x_{12}
\end{align}
\end{subequations}
Note that both $x_{11}$ and $x_{12}$ can't simultaneously be zero. If either of them are zero we get a zero column as desired and if both are non-zero we get that $x_{11} = \lambda = x_{12}$ and in that case both columns are identical. \\\\
Now we assume that the result holds for $n= k$ and show that it is true for $n=k+1$ 
Let $L$ be a multiplicative sublattice of $\mathbb{Z}^{k+1}$ of co-rank $1$. Then $L$ has a basis which forms the rows of a matrix $M = (x_{ij})$ such that $x_{ij} = 0$ for $i < j-1$.  Now $M$ can be written as 
\[
M=
\begin{bmatrix}    
M'       &     0 \\
v        &     x_{k,k+1} 
\end{bmatrix}.
\] 
If $x_{k,k+1} = 0$ then we have a column of zeros and we have nothing to prove. So from here on we assume that $x_{k,k+1} \neq 0$.
Clearly $M'$ represents a multiplicative sublattice of
$\mathbb{Z}^{k}$. By the induction hypothesis $M'$ has a column of zeros or a pair of identical columns. \\\\
\textbf{Case 1 : } $M'$ has a column of zeros.

\

Suppose the $j$th column of $M'$ is 0. If $x_{k,j} = 0$ we are done. So we assume that $x_{k,j} \neq 0$. Consider the product of the bottom row $R_{k}$ of $M$ with itself. Write \[R_{k}^2 =  \sum_{m=1}^{m=k} \lambda_{m}R_{m}.\] So we have the following equations.
\begin{subequations}
\begin{align}
x_{k,k+1}^2 = \lambda_{k} x_{k,k+1}\\
x_{k,j}^2 = \lambda_{k} x_{k,j}
\end{align}
\end{subequations}
As $x_{i,j} = 0$ for $ i \neq k$. Since both $x_{k,k+1}$ and $x_{k,j}$ are non-zero we have $x_{k,j} = \lambda_{k} = x_{k,k+1}$ which implies that the j{th} and $(k+1)$st columns are identical as all other entries are $0$. \\\\
\textbf{Case 2 : } $M'$ has a pair of identical columns.

\

Let the ith and jth columns of $M'$ be equal. We can assume that these are non-zero columns as the first case already deals with zero columns. Therefore there is $l<k$ such that $x_{l,i} = x_{l,j} \neq 0$. Now \[ R_{l} \cdot R_{k} = \sum_{m=1}^{m=k} \gamma_{m}R_{m}\] So we have
\begin{subequations}
\begin{align}
x_{k,i}x_{l,i} = \sum_{m=1}^{m=k} \gamma_{m}x_{m,i} \\
x_{k,j}x_{l,j} = \sum_{m=1}^{m=k} \gamma_{m}x_{m,j} \\
0 = \gamma_{k}x_{k,k+1}
\end{align}
\end{subequations}
$\gamma_{k} = 0$ as $x_{k,k+1} \neq 0$. This and the fact that $x_{m,i} = x_{m,j}$ for $m < k$ gives us that each term in the summations in (3a) and (3b) are equal which implies that the sums are equal. Therefore we have that in fact $x_{k,i}x_{l,i} =x_{k,j}x_{l,j}$. Since $x_{l,i} = x_{l,j} \neq 0$ we have $x_{k,i} = x_{k,j}$. So that the $i$th and $j$th columns of $M$ are identical.

\end{proof}
\begin{sheep}
Any basis of a multiplicative lattice L of co-rank 1 will form the rows of an $(n-1) \times n$ matrix M with $n-1$ distinct non-zero columns.
\end{sheep}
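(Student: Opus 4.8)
The plan is to isolate the quantity ``number of distinct nonzero columns'' of a matrix whose rows form a basis of $L$, show that it does not depend on which basis is chosen, and then pin down its value by combining Proposition \ref{corank-1} with a rank count. The whole point of the corollary, as opposed to the proposition, is that the column-repetition phenomenon found for one carefully chosen basis is in fact a basis-independent feature of $L$.

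First I would record the standard fact that any two matrices $M, M'$ whose rows each form a basis of $L$ are related by $M' = A M$ for some $A \in \operatorname{GL}_{n-1}(\Z)$: each row of $M'$ is an integral combination of the rows of $M$ and conversely, so both $A$ and $A^{-1}$ are integral. The columns of $M'$ are then obtained by applying $A$ to the columns of $M$. Since $A$ is invertible, for any columns $c_i, c_j$ of $M$ we have $A c_i = 0 \iff c_i = 0$ and $A c_i = A c_j \iff c_i = c_j$. Hence left multiplication by $A$ preserves exactly which columns vanish and which columns coincide; in particular the number of distinct nonzero columns is an invariant of $L$, independent of the chosen basis.

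Next I would invoke Proposition \ref{corank-1}, which supplies one basis whose matrix $M_0$ has either a column of zeros or two identical columns. In either case at least one of the $n$ columns duplicates another column or is zero, so $M_0$ has at most $n-1$ distinct nonzero columns; by the invariance just established, \emph{every} basis of $L$ has at most $n-1$ distinct nonzero columns. For the reverse bound I would use that $L$ has rank $n-1$, so any representing matrix $M$ has rank $n-1$, whence its $n$ columns span an $(n-1)$-dimensional space and include $n-1$ linearly independent columns. Linearly independent vectors are distinct and nonzero, so every basis has at least $n-1$ distinct nonzero columns. Combining the two inequalities yields exactly $n-1$ distinct nonzero columns for every basis, which is the assertion.

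The only genuinely substantive step is the invariance argument of the second paragraph; once the equality-and-vanishing pattern of the columns is seen to be stable under the $\operatorname{GL}_{n-1}(\Z)$-action relating different bases, the matching upper and lower bounds follow from Proposition \ref{corank-1} and elementary dimension counting, respectively. I expect no further obstacle, since no structural feature of the Hessenberg normal form from Lemma \ref{lower} is needed beyond its use inside Proposition \ref{corank-1}.
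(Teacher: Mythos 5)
Your proposal is correct and follows essentially the same route as the paper: the paper's one-line proof is precisely your invariance observation (the pattern of vanishing and coinciding columns is preserved under the $\operatorname{GL}_{n-1}(\Z)$ change-of-basis action, i.e.\ under row operations), applied to the special basis supplied by Proposition \ref{corank-1}. Your write-up is in fact more complete, since you also make explicit the lower bound --- that the $n-1$ linearly independent columns forced by the rank are automatically distinct and nonzero --- which the paper leaves implicit.
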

\begin{proof}
The property of having a column of zeros or two identical columns is invariant under elementary row operations. This means that any matrix whose rows are the basis of a multiplicative sublattice of co-rank $1$ of $\mathbb{Z}^n$ will have this property.
\end{proof}
\begin{thm}[Rigidity]\label{rigidity}
Let $L$ be a multiplicative sublattice of $\mathbb{Z}^{n}$ of co-rank k, then every basis of $L$  forms the rows of a $ (n-k) \times n$ matrix $M$ with exactly $n-k$ distinct non-zero columns.
\end{thm}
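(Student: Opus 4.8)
The plan is to translate the combinatorial statement about columns into a statement about algebra homomorphisms and then invoke linear independence of characters. Write $d = n-k$ for the rank of $L$, and let $M$ be the $(n-k)\times n$ matrix whose rows $v_1,\dots,v_d$ form a basis of $L$. Set $V = L \otimes_{\Z} \Q \subseteq \Q^n$. Because $L$ is multiplicative, $V$ is closed under the componentwise product, so $V$ is a (not necessarily unital) commutative $\Q$-subalgebra of $\Q^n$ with $\dim_{\Q} V = d$. For each $j$ the coordinate projection $\pi_j \colon \Q^n \to \Q$ is multiplicative, so its restriction $\chi_j := \pi_j|_V \colon V \to \Q$ is a $\Q$-algebra homomorphism. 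In the dual basis of $v_1,\dots,v_d$ the functional $\chi_j$ is represented precisely by the $j$-th column $c_j = (x_{1j},\dots,x_{dj})^{T}$ of $M$, since $\chi_j(v_m) = \pi_j(v_m) = x_{mj}$. Hence two columns coincide exactly when the corresponding homomorphisms coincide, and a column vanishes exactly when its homomorphism is zero. The number of distinct non-zero columns of $M$ therefore equals the number of distinct non-zero functionals among $\chi_1,\dots,\chi_n$; the latter set is intrinsic to $L$ and does not depend on the choice of basis, so it suffices to bound this number, and the conclusion will then hold for every basis simultaneously.

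The lower bound is pure linear algebra and uses nothing about multiplicativity: since the $d$ rows of $M$ are linearly independent, $M$ has rank $d$, so its column rank is also $d$, and at least $d$ of the columns are linearly independent, hence distinct and non-zero. For the upper bound, which is the crux, I would invoke the classical linear independence of characters (Dedekind/Artin): distinct $\Q$-algebra homomorphisms $V \to \Q$ are linearly independent as elements of the dual space $V^{*}$. Since $\dim_{\Q} V^{*} = \dim_{\Q} V = d$, there can be at most $d$ distinct non-zero homomorphisms among the $\chi_j$. Combining the two bounds gives exactly $d = n-k$ distinct non-zero columns.

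The only real content lies in setting up the first paragraph correctly; once the columns are recognised as characters the result is immediate, and it subsumes the co-rank one Corollary. The point to be careful about is that the $\chi_j$ need not be unital (the algebra $V$ need not contain $(1,\dots,1)$), so I would use the version of the independence argument that needs only multiplicativity and $\Q$-linearity: from a shortest nontrivial relation $\sum_i \lambda_i \chi_i = 0$ among distinct non-zero homomorphisms, pick $a \in V$ with $\chi_1(a) \neq \chi_2(a)$; applying the relation to $ax$ gives $\sum_i \lambda_i \chi_i(a)\chi_i = 0$, and subtracting $\chi_1(a)\sum_i \lambda_i \chi_i$ produces a strictly shorter nontrivial relation, a contradiction. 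A more hands-on alternative would bootstrap from the co-rank one case: the structure constants expressing $v_a \cdot v_b$ in the basis are the same for every column, so each column $(y_1,\dots,y_d)$ solves the single fixed quadratic system $y_a y_b = \sum_m \lambda_m^{(a,b)} y_m$; but converting finiteness of this solution set into the sharp bound $d$ is exactly what the character argument supplies cleanly, so I would prefer the algebra-homomorphism formulation.
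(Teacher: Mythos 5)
Your proof is correct, but it takes a genuinely different route from the paper's. The paper argues in two stages: first the co-rank one case, proved by induction on $n$ with an explicit case analysis on the structure constants (producing either a zero column or two identical columns), and then a reduction of general co-rank $k$ to co-rank one by appending each remaining column of $M$ to a chosen set of $n-k$ linearly independent columns and applying the co-rank one corollary to the resulting $(n-k)\times(n-k+1)$ matrix. You instead pass to the $\Q$-algebra $V=L\otimes_{\Z}\Q$, identify the columns of $M$ with the coordinate characters $\chi_j=\pi_j|_V$ expressed in the dual basis, and obtain the upper bound from linear independence of distinct non-zero multiplicative $\Q$-linear functionals, the lower bound being just the column rank. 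This is shorter, avoids the induction entirely, and makes the ``every basis'' clause transparent, since the set of characters is intrinsic to $L$ (the paper needs a separate corollary about invariance under row operations for this). You were also right to flag the non-unital issue: the classical Dedekind--Artin statement is usually quoted for characters valued in $\Q^{\times}$ or for unital algebra maps, and your minimal-relation argument (apply the relation to $ax$, subtract $\chi_1(a)$ times the original, note the $\chi_2$-coefficient survives) is exactly the version needed here; omitting that care would have been the one real gap. What the paper's elementary approach buys in exchange is the explicit lower-triangular normal form of Lemma \ref{lower} and Proposition \ref{corank-1}, entirely within integer matrices; but as a proof of Theorem \ref{rigidity} itself your argument is complete and subsumes the co-rank one case as $k=1$.
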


\begin{proof}
  The matrix $M$ has column rank $n-k$.  Let's say the first $n-k$
  columns are linearly independent, and hence, distinct and nonzero.
  Let $M^{(i)}$ be the $(n-k)\times (n-k+1)$-dimensional matrix
  obtained by appending the $i^{th}$ column of $M$ to the first $n-k$
  columns of $M$.  Since the rows $M^{(i)}$ generate a multiplicative
  sublattice of $\Z^{n-k+1}$ of corank 1, the previous corollary
  implies that $M^{(i)}$ has exactly $n-k$ distinct nonzero columns.
  Hence the $i^{th}$ column of $M$ must be equal to one of the first
  $n-k$ columns or 0.  Since this is true for all $i, n-k<i\leq n$, we
  conclude that the full matrix has exactly  $n-k$ distinct nonzero columns.
\end{proof}

\section{Proof of Theorem 2}\label{section-proof_main_thm}

We begin with a definition.
\begin{deff}
An injective map  
$$
g: \Z^n \to \Z^{n+k}
$$
of the form $g(x_1, \dots, x_n) = (y_1, y_2, \dots, y_{n+k})$ with
each $y_j$ either equal to some $x_i$ or $0$ is called {\em acceptable}.
\end{deff}

Theorem \ref{rigidity} can be formulated as follows: 

\begin{thm}\label{rigidity-2}
  Any multiplicative sublattice of co-rank $k$ in $\Z^{n+k}$ is of the
  form $g(L)$ where $ g : \Z^n \to \Z^{n+k}$ is an acceptable map and
  $L$ is a multiplicative sublattice of full rank in $\Z^n$.
\end{thm}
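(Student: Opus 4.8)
The plan is to read this off from the Rigidity Theorem (Theorem \ref{rigidity}), the only real work being the bookkeeping that repackages the repeated columns into an acceptable map. So I would start with a multiplicative sublattice $L$ of co-rank $k$ in $\Z^{n+k}$; its rank is $n$, so a choice of basis yields an $n\times(n+k)$ integer matrix $M$ whose rows generate $L$. By Theorem \ref{rigidity}, $M$ has exactly $n$ distinct non-zero columns. Because $M$ has rank $n$, its column rank is $n$, and since every column is either zero or equal to one of the $n$ distinct non-zero columns, those $n$ columns span the column space and hence must themselves be linearly independent. I would then collect them, in order of first appearance, as the columns of an $n\times n$ matrix $N$ of rank $n$, and let $L'\subseteq\Z^n$ be the lattice generated by the rows of $N$, which is automatically of full rank $n$.

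Next I would build the acceptable map. For each index $j$ with $1\le j\le n+k$, the $j$-th column of $M$ is either zero or equals the $i$-th of the distinct columns; define $g:\Z^n\to\Z^{n+k}$ by declaring the $j$-th coordinate of $g(x_1,\dots,x_n)$ to be $x_i$ in the latter case and $0$ in the former. Every output coordinate is then some $x_i$ or $0$, and since each of the $n$ distinct columns actually occurs in $M$, each variable $x_i$ appears among the outputs; hence $g$ is injective and therefore acceptable. A direct entrywise check shows that $g$ sends the $i$-th row of $N$ to the $i$-th row of $M$, so by $\Z$-linearity $g$ carries a generating set of $L'$ to a generating set of $L$, giving $g(L')=L$.

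The step that makes multiplicativity transfer cleanly is the observation that $g$ is in fact an injective homomorphism for the coordinatewise product: for any $u,v\in\Z^n$ one has $g(u)\cdot g(v)=g(u\cdot v)$, since in each coordinate $g$ either copies a single fixed variable from both factors or sends both to $0$. Granting this, I would conclude that $L'$ is multiplicative: if $u',v'\in L'$, then $g(u'),g(v')\in L=g(L')$, so $g(u'\cdot v')=g(u')\cdot g(v')\in L=g(L')$, and injectivity of $g$ forces $u'\cdot v'\in L'$. Together with the full-rank statement already noted, this exhibits $L=g(L')$ with $g$ acceptable and $L'$ a full-rank multiplicative sublattice of $\Z^n$, as required.

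I do not expect a genuine obstacle here once Theorem \ref{rigidity} is available; the only points demanding care are justifying that the $n$ distinct non-zero columns are linearly independent (so that $N$, and hence $L'$, really is full rank) and verifying the injectivity of $g$ (which needs each distinct column to occur at least once). Both are guaranteed by the precise wording of the rigidity statement, namely that there are \emph{exactly} $n$ distinct non-zero columns.
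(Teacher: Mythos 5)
Your proof is correct and follows the paper's intended route: the paper states Theorem \ref{rigidity-2} as an immediate reformulation of the Rigidity Theorem with no written proof, and your argument is exactly the bookkeeping that the authors leave implicit. Your two flagged points of care (linear independence of the $n$ distinct non-zero columns, and injectivity of $g$) are both handled correctly, and the observation that $g$ commutes with the coordinatewise product is the right way to transfer multiplicativity to $L'$.
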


The next observation is simple but essential for what follows. 

\begin{lem}\label{index}
For any acceptable map $g$ and any sublattice $L$ in $\Z^n$ of rank $n$, we have 
$$
\# (\Z^{n+k} / g(L))_{\rm{tor}} = [\Z^n : L].
$$
\end{lem}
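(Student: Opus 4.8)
The plan is to interpret the torsion subgroup of $\Z^{n+k}/g(L)$ through saturation and thereby reduce the entire computation to a single claim: that the image $g(\Z^n)$ is a primitive (saturated) sublattice of $\Z^{n+k}$. Recall that for any sublattice $S$ of $\Z^N$, the torsion subgroup of $\Z^N/S$ is $\bar S/S$, where $\bar S = (\mathrm{span}_\Q S)\cap \Z^N$ denotes the saturation of $S$; in particular $\#(\Z^N/S)_{\mathrm{tor}} = [\bar S : S]$. I would apply this with $N = n+k$ and $S = g(L)$.

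First I would observe that, since $L$ has full rank $n$ in $\Z^n$, the image $g(L)$ has finite index in $g(\Z^n)$, so the two lattices span the same $\Q$-subspace of $\Q^{n+k}$ and therefore have the same saturation, $\overline{g(L)} = \overline{g(\Z^n)}$. Consequently
$$
\#(\Z^{n+k}/g(L))_{\mathrm{tor}} = [\overline{g(\Z^n)} : g(L)] = [\overline{g(\Z^n)} : g(\Z^n)]\cdot [g(\Z^n) : g(L)].
$$
Because $g$ is injective, it induces a bijection between the cosets of $L$ in $\Z^n$ and the cosets of $g(L)$ in $g(\Z^n)$, so $[g(\Z^n) : g(L)] = [\Z^n : L]$. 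Thus the lemma will follow once I show that the first factor equals $1$, i.e. that $g(\Z^n)$ is saturated in $\Z^{n+k}$.

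This last point is the crux, and it is precisely where acceptability enters. Writing $g$ via its $(n+k)\times n$ matrix $A$, each output coordinate is some input variable $x_i$ or $0$, and injectivity forces every $x_i$ to occur in at least one output coordinate; so I can fix for each $i$ a coordinate $\pi(i)$ with $y_{\pi(i)} = x_i$. To prove saturation, suppose $w\in\Z^{n+k}$ satisfies $mw = g(v)$ for some $v\in\Z^n$ and some integer $m\geq 1$. Reading off the $\pi(i)$-th coordinate gives $m\,w_{\pi(i)} = v_i$ for every $i$, so $m$ divides each $v_i$ and $v = m v'$ with $v'\in\Z^n$. Then $mw = g(mv') = m\,g(v')$, hence $w = g(v')\in g(\Z^n)$, which shows $g(\Z^n)$ is saturated.

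I expect the only genuine content to be this saturation step; everything else is bookkeeping with lattice indices (the tower law $[\overline{g(\Z^n)}:g(L)] = [\overline{g(\Z^n)}:g(\Z^n)][g(\Z^n):g(L)]$ is valid since all three are full-rank lattices in a common subspace). The main thing to get right is the use of injectivity to guarantee that each coordinate $x_i$ appears somewhere in the output, since that is exactly what allows me to divide the preimage vector $v$ by $m$ coordinatewise. Note that multiplicativity of $L$ is not needed here: the identity holds for any full-rank $L$.
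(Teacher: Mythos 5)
Your proof is correct, and it takes a genuinely different route from the paper. The paper argues via the Smith Normal Form: if $D$ is the Smith Normal Form of (a matrix for) $L$, then since the columns of a matrix for $g(L)$ are just the columns of $L$'s matrix repeated or zero, one can cancel duplicated columns by column operations and conclude that $g(L)$ has Smith Normal Form $\left[ D \mid 0 \right]$, whence the torsion of the quotient has order $\det D = [\Z^n : L]$. You instead identify $(\Z^{n+k}/g(L))_{\rm{tor}}$ with $\overline{g(L)}/g(L)$ via saturation, split the index as $[\overline{g(\Z^n)} : g(\Z^n)]\cdot[g(\Z^n):g(L)]$, transport the second factor back to $[\Z^n : L]$ by injectivity of $g$, and kill the first factor by showing $g(\Z^n)$ is saturated --- which is where you correctly isolate the role of acceptability plus injectivity (every $x_i$ appears in some output coordinate, so a relation $mw = g(v)$ forces $m \mid v_i$ for all $i$). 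Your version is somewhat longer but more self-contained and makes explicit the structural fact (primitivity of $g(\Z^n)$) that the paper's one-line Smith Normal Form computation leaves implicit; the paper's version is shorter but leans on the reader accepting that duplicating and appending zero columns does not change the elementary divisors. Your closing observation that multiplicativity of $L$ plays no role is accurate and matches the hypotheses of the lemma as stated.
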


\begin{proof}
Let the Smith Normal form of the lattice L be D. Then $[\Z^n : L] = \text{Det(D)}$. Since in any basis the distinct columns of g(L) are the same as those of L, we have that the Smith Normal Form of L is $[D | 0]$. So that $\# (\Z^{n+k} / g(L))_{\rm{tor}} = [\Z^n : L]$.
\end{proof}

Two acceptable maps $g_1, g_2 : \Z^n \to \Z^{n+k}$ are called {\em
  equivalent} if there is a permutation $\tau \in S_n$ such that
$g_1 = g_2 \circ \tau$.  We next describe a complete set of
representatives for this equivalence relation.

Let $g: \Z^n \to \Z^{n+k}$ be an acceptable map and $\{f_i\}$ the standard
basis for $\Z^{n+k}$.  By definition of acceptable we can write
\[g(x_1,\ldots, x_n)=\sum_{i=1}^n x_i\left(\sum_{j\in A_i^g} f_j\right). 
\]
for a collection of subsets $\{A_1^g, \dots, A_n^g \}$ of
$\{1, \dots, n+k\}$. In fact, if we define
$A_0^g = \{0, \dots, n+k \} \setminus \bigcup_{i=1}^n A_i^g$, then
$\mathcal P^g: = \{A_0^g, \dots, A_n^g \}$ is a partition of
$\{0, \dots, n+k\}$.  We will call an acceptable function $g$ {\em
  ordered} if  $\min A_i^g < \min A_j^g$ whenever  $i < j$.  Given an
arbitrary acceptable map $g$ there exists exactly one permutation
$\tau \in S_n$ for which $g\circ \tau$ is ordered. That is, 

\begin{lem}
  The set of ordered acceptable maps is a set of representatives for
  the equivalence classes of acceptable maps $\Z^n \to \Z^{n+k}$ under
  the action of $S_n$.
\end{lem}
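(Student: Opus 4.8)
The plan is to show that the $S_n$-equivalence class of an acceptable map $g$ is completely recorded by the unordered partition $\mathcal{P}^g = \{A_0^g, A_1^g, \dots, A_n^g\}$ of $\{0, \dots, n+k\}$, and that ``ordered'' singles out one distinguished labeling of each such partition. Concretely, I would prove existence and uniqueness of an ordered representative in each orbit. The first and decisive step is to understand how precomposition with $\tau \in S_n$ acts on the blocks. Writing $g(x) = (y_1, \dots, y_{n+k})$ with $y_j = x_i$ exactly when $j \in A_i^g$ and $y_j = 0$ when $j \in A_0^g$, a direct computation shows that $g \circ \tau$ has $A_i^{g\circ\tau} = A_{\pi(i)}^g$ for $1 \le i \le n$, where $\pi$ is $\tau$ or $\tau^{-1}$ depending on the convention fixed for the coordinate action, while the zero-block is untouched, $A_0^{g\circ\tau} = A_0^g$. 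In words: permuting the inputs merely relabels the nonzero blocks and leaves the positions of the zero coordinates fixed. Hence $\mathcal{P}^g$ is constant on $S_n$-orbits, and conversely, if $\mathcal{P}^{g_1} = \mathcal{P}^{g_2}$ then matching up their nonzero blocks (the blocks not containing $0$) yields a $\tau$ with $g_1 = g_2 \circ \tau$; thus two acceptable maps are equivalent if and only if they determine the same partition.

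For existence, fix an acceptable map $g$. The blocks $A_1^g, \dots, A_n^g$ are pairwise disjoint and nonempty, so their minima are pairwise distinct; let $\rho \in S_n$ be the unique permutation with $\min A_{\rho(1)}^g < \dots < \min A_{\rho(n)}^g$. Taking $\tau = \rho$ (or $\rho^{-1}$, per the convention above) produces an equivalent map $g \circ \tau$ whose nonzero blocks are listed in increasing order of their minima, i.e. an ordered map. The block $A_0^g$ requires no attention: it contains $0$, so $\min A_0^g = 0$ is automatically below every $\min A_i^g \ge 1$, which is exactly why it is harmless to label it the $0$-th block.

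For uniqueness, suppose $g_1$ and $g_2$ are both ordered and equivalent. By the first step they share the same partition $\mathcal{P}$, so $\{A_1^{g_1}, \dots, A_n^{g_1}\}$ and $\{A_1^{g_2}, \dots, A_n^{g_2}\}$ are the same collection of sets, namely the blocks of $\mathcal{P}$ other than the one containing $0$. Being ordered forces each sequence to list these blocks in increasing order of their (distinct) minima, and that ordering is unique; hence $A_i^{g_1} = A_i^{g_2}$ for every $i$, so $g_1 = g_2$. Combined with existence, this shows every orbit meets the ordered maps in exactly one point, which is the assertion of the lemma.

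I do not anticipate a serious obstacle here: the argument is elementary once the first step is in place. The only point genuinely requiring care is that bookkeeping step --- pinning down precisely how $\tau$ relabels the blocks under whichever convention is adopted for the $S_n$-action on $\Z^n$, and verifying that the zero-block is truly preserved --- after which both existence and uniqueness reduce to the trivial observation that pairwise disjoint nonempty subsets of $\{1, \dots, n+k\}$ have pairwise distinct minima.
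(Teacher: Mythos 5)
Your proposal is correct and follows the same route the paper takes (the paper states the lemma without proof, relying on the immediately preceding observation that each orbit contains exactly one ordered map); you simply fill in the routine verification that the unordered partition $\mathcal{P}^g$ is the complete orbit invariant and that distinct minima of the disjoint nonempty blocks $A_1^g,\dots,A_n^g$ pin down a unique ordered labeling. No gaps.
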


Going in the other direction, to a set partition $\mathcal P$ of
$\{0, \dots, n+k\}$ into $n+1$ parts, we may associate an ordered
acceptable map $g_{\mathcal P}$ as follows.  Begin by ordering
$\mathcal P=\{A_0,A_1,...,A_n\}$ in the following way: if $i<j$ then
$\min(A_i) < \min(A_j)$.  In particular, $0 \in A_0$. Define
\[g_{\mathcal P}(x_1,\ldots , x_n)=
\sum_{i=1}^n x_i\left(\sum_{j\in A_i} f_j\right). 
\]
For example $\{\{0,3,4\},\{1,5\},\{2,7,8\},\{6\}\}$ corresponds to the map from $\Z^3$ to $\Z^8$ which sends
$$(a,b,c) \mapsto (a,b,0,0,a,c,b,b),$$  i.e., $0$ is in the $3$ and $4$ spot, `$a$' in the $1$ and $5$ entries, `$b$' in 
the $2$,$7$ and $8$ entries and `$c$' in the $6$th entry.



It is clear that the maps $g\mapsto \mathcal P^g$ and
$\mathcal P\mapsto g_P$ are inverse to one another and provide a
bijection between ordered acceptable maps $\Z^n\to \Z^{n+k}$ and set
partitions of $\{0, \dots, n+k\}$ into $n+1$ parts.  This and the
definition of Stirling numbers of the second kind lead to the next corollary:

\begin{sheep}\label{coro}
The number of equivalence classes of acceptable maps $\Z^n \to \Z^{n+k}$ is equal to $\left\{ {n + k + 1 \atop n +1} \right\}$. 
\end{sheep}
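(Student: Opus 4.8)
The plan is to combine the two structural results established immediately above and reduce the count to a purely combinatorial one. By the preceding lemma, the ordered acceptable maps form a complete and irredundant set of representatives for the equivalence classes of acceptable maps $\Z^n \to \Z^{n+k}$ under the $S_n$-action; hence the number of equivalence classes equals the number of ordered acceptable maps. The explicit bijection $g \mapsto \mathcal P^g$, with inverse $\mathcal P \mapsto g_{\mathcal P}$, then identifies the ordered acceptable maps with the set partitions of $\{0, \dots, n+k\}$ into exactly $n+1$ non-empty parts. Chaining these two identifications, the quantity we want to compute is exactly the number of such set partitions.

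To finish, I would observe that the set $\{0, \dots, n+k\}$ has $n+k+1$ elements. By the very definition of the Stirling number of the second kind recalled in Theorem \ref{main-theorem}, the number of partitions of a set of $n+k+1$ elements into $n+1$ non-empty blocks is $\left\{ {n+k+1 \atop n+1} \right\}$. Combining this with the chain of bijections above yields the asserted equality and completes the argument.

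The one point I would check with care---and essentially the only place where anything beyond formal bookkeeping occurs---is that $\mathcal P^g$ is genuinely a partition into \emph{exactly} $n+1$ non-empty parts, so that the Stirling number with the correct second parameter appears. Each block $A_i^g$ with $i \geq 1$ is non-empty precisely because $g$ is required to be injective: if some variable $x_i$ occurred in no output coordinate, then $g$ would not depend on $x_i$ and so could not be injective. The block $A_0^g$ is non-empty because it contains the adjoined index $0$ by construction. This is exactly the role played by enlarging the index set $\{1, \dots, n+k\}$ to $\{0, \dots, n+k\}$: it promotes the (possibly empty) set of coordinates sent to $0$ into a bona fide block, guaranteeing that one always obtains $n+1$ parts rather than a variable number, and it pins down $A_0^g$ as the block containing the global minimum $0$ under the ordering convention. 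Once this is verified, the corollary follows at once, so I anticipate no genuine obstacle.
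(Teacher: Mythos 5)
Your proposal is correct and follows exactly the paper's route: representatives via ordered acceptable maps, the bijection $g \mapsto \mathcal P^g$ with set partitions of $\{0,\dots,n+k\}$ into $n+1$ parts, and the definition of the Stirling number. Your added check that injectivity forces each $A_i^g$ ($i\geq 1$) to be non-empty, while $0\in A_0^g$ handles the remaining block, is a detail the paper leaves implicit but is entirely consistent with its argument.
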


The final step in the proof of Theorem \ref{main-theorem} is a
refinement of Theorem \ref{rigidity-2}.
\begin{prop}\label{acceptable-order}
  Any multiplicative sublattice of co-rank $k$ in $\Z^{n+k}$ is of the
  form $g(L)$ where $g : \Z^n \to Z^{n+k}$ is an ordered acceptable
  map and $L$ is a multiplicative sublattice of full rank in $\Z^n$ .
  Moreover, such $g$ and $L$ are uniquely determined.
\end{prop}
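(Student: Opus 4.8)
The plan is to split the statement into existence and uniqueness, deriving existence almost immediately from the results already in hand. By Theorem \ref{rigidity-2}, any multiplicative sublattice $\Lambda$ of co-rank $k$ in $\Z^{n+k}$ can be written as $\Lambda = g(L)$ for some acceptable map $g$ and some full-rank multiplicative sublattice $L \subseteq \Z^n$. To upgrade $g$ to an ordered map I would invoke the lemma that each $S_n$-equivalence class of acceptable maps contains a unique ordered representative: choose $\tau \in S_n$ with $g \circ \tau$ ordered, and observe that $\Lambda = g(L) = (g \circ \tau)(\tau^{-1}L)$. Since a coordinate permutation is a ring automorphism of $\Z^n$, the lattice $\tau^{-1} L$ is again a full-rank multiplicative sublattice, so $(g \circ \tau,\, \tau^{-1} L)$ represents $\Lambda$ in the required ordered form.

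For uniqueness the key observation is that the unordered partition $\mathcal P^g = \{A_0^g, \dots, A_n^g\}$ attached to $g$ can be read off from the lattice $\Lambda = g(L)$ alone, independently of any chosen basis. Concretely, I would fix a basis of $L$, form the $n \times n$ matrix $N$ whose rows are these basis vectors, and note that the $n \times (n+k)$ matrix $M$ representing $g(L)$ has $j$-th column equal to the $i$-th column of $N$ when $j \in A_i^g$ (for $i \geq 1$) and equal to the zero column when $j \in A_0^g$. Because $L$ has full rank, $N$ is nonsingular, so its $n$ columns are pairwise distinct and nonzero; hence the parts $A_1^g, \dots, A_n^g$ are exactly the equivalence classes of equal nonzero columns of $M$, while $A_0^g$ consists of $0$ together with the indices of the zero columns.

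The one point requiring care, and the main obstacle, is that $M$ is not canonical: a different basis of $L$ replaces $M$ by $PM$ for some $P \in GL_n(\Z)$. I would argue that the relation ``columns $j$ and $j'$ are equal'' and the property ``column $j$ is zero'' are preserved by left multiplication by the invertible matrix $P$ in both directions, so the resulting partition of $\{1, \dots, n+k\}$ (and hence of $\{0, \dots, n+k\}$ after adjoining $0$ to the zero-part) depends only on $\Lambda$. This shows that the unordered partition $\mathcal P^g$ is an invariant of $\Lambda$; by the bijection between ordered acceptable maps and set partitions of $\{0, \dots, n+k\}$ into $n+1$ parts established above, the ordering condition then singles out $g$ uniquely. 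Finally, with $g$ pinned down, uniqueness of $L$ is immediate: since $g$ is injective by definition of acceptable, $L = g^{-1}(\Lambda)$ is forced.
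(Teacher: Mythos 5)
Your proposal is correct and follows essentially the same route as the paper: both arguments hinge on reading off from the lattice the partition of $\{0,\dots,n+k\}$ determined by the pattern of zero and coinciding columns, and then invoking the bijection between such partitions and ordered acceptable maps to pin down $g$, with $L$ recovered as the preimage. You are somewhat more explicit than the paper about two points it leaves implicit --- the reordering step $\Lambda = (g\circ\tau)(\tau^{-1}L)$ for existence, and the invariance of the column-equality partition under change of basis $M \mapsto PM$ for well-definedness --- but the underlying argument is the same.
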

\begin{proof}
  A consequence of Theorem \ref{rigidity} is that any multiplicative
  sublattice $L'$ in $\Z^{n+k}$ will correspond to some partition
  $\mathcal P$ of $\{0,1,...n+k\}$ into $n+1$ parts. This partition is
  obtained by the same method that associated a partition to an
  acceptable map. The partition $\mathcal P$ corresponds to a unique
  ordered acceptable map $f_\mathcal P$. The lattice $L'$ is clearly
  in the range of $f_\mathcal P$ so ${f_\mathcal P}^{-1}(L')$ is the
  unique lattice $L$ in $\Z^n$ which maps to $L'$ under
  $f_\mathcal P$.\
\end{proof}

\begin{proof}[Proof of Theorem \ref{main-theorem}] Combine Proposition \ref{acceptable-order} with Corollary \ref{coro}.  \end{proof}

\end{document}